\documentclass[12pt,reqno]{amsart}

\usepackage{hyperref}

\usepackage[headsep=30pt,footskip=30pt,bottom=2.5cm]{geometry}

\usepackage[all]{xy}

\xymatrixcolsep{2cm}

\numberwithin{equation}{section}

\allowdisplaybreaks[1]

\newtheorem{theorem}{Theorem}[section]

\newtheorem{proposition}[theorem]{Proposition}
\newtheorem{definition}[theorem]{Definition}

\newtheorem{lemma}[theorem]{Lemma}

\newtheorem{corollary}[theorem]{Corollary}

\theoremstyle{remark}

\newtheorem{remark}[theorem]{Remark}

\begin{document}

\title[Chen-Ruan cohomology and moduli of parabolic bundles]{Chen-Ruan
cohomology and moduli spaces of parabolic bundles over a Riemann surface}

\author[I. Biswas]{Indranil Biswas}

\address{School of mathematics, Tata Institute of Fundamental Research, 1 Homi Bhabha Road, Colaba, Mumbai 
400005, India}

\email{indranil@math.tifr.res.in}

\author[P. Das]{Pradeep Das}

\address{Department of Mathematical Sciences,
Rajiv Gandhi Institute of Petroleum Technology, Jais,
Amethi, Uttar Pradesh 229304, India}

\email{pradeepdas0411@gmail.com}

\author[A. Singh]{Anoop Singh}

\address{School of mathematics, Tata Institute of Fundamental Research, 1 Homi Bhabha Road, Colaba, Mumbai 
400005, India}

\email{anoops@math.tifr.res.in}

\keywords{Chen-Ruan cohomology, moduli spaces, parabolic bundles}

\subjclass[2010]{14H60, 14D21, 53D45}

\begin{abstract}
Let $(X,\,D)$ be an $m$-pointed compact Riemann surface of 
genus at least $2$. For each $x \,\in\, D$, fix full flag 
and concentrated weight system $\alpha$. 
Let $P \mathcal{M}_{\xi}$ denote the moduli space
of semi-stable parabolic vector bundles
of rank $r$ and determinant $\xi$ over $X$ with weight system $\alpha$,
where $r$ is a prime number and $\xi$ is a holomorphic line bundle over $X$ of degree $d$
which is not a multiple of $r$.
We compute the Chen-Ruan cohomology of the orbifold for the action on $P \mathcal{M}_{\xi}$
of the group of $r$-torsion points in ${\rm Pic}^0(X)$.
\end{abstract}
\maketitle

\section{Introduction}\label{sec:introduction}

Chen and Ruan introduced a new cohomology theory in \cite{CR} for orbifolds which is named after them. It is the
degree zero part of the small quantum cohomology ring, constructed in \cite{CR1}, of the orbifold. It
contains the usual cohomology ring of the orbifold as a sub-ring.

The Chen-Ruan cohomology for the orbifolds arising from the moduli space of stable vector bundles of rank $2$
and degree $1$ over a compact Riemann surface was computed in \cite{BP}, which was
subsequently generalized for arbitrary prime rank in \cite{BP1}.

Let $X$ be a compact Riemann surface of genus $g\,\geq\, 2$, and let $D\,=\, \{x_1,\, \ldots,\, x_m \}$
be a finite subset of $X$; the points of $D$ will be called parabolic points. Fix a holomorphic
line bundle $\xi$ on $X$ of degree one.
Let $PM_{\xi}(2)$ denote the moduli space of full flag stable parabolic vector bundles $E_*$ over $X$ of rank
$2$ and fixed determinant $\bigwedge^2 E \,\cong\, \xi$; the parabolic weights are assumed to be generic.
The moduli space $PM_{\xi}(2)$ is a smooth projective variety of dimension $3g-3+m$.
Let $\Gamma_2 \,\subset\, {\rm Pic}^0(X)$ be the subgroup defined by the points of order two, meaning
the holomorphic line bundles $L$ with $L^{\otimes 2}\,=\,{\mathcal O}_X$. Then $\Gamma_2$ acts 
on $PM_{\xi}(2)$; the action of $L\, \in\, \Gamma_2$ sends any $E_*\, \in\, PM_{\xi}(2)$ to 
$E_* \otimes L$. We get the smooth orbifold $PM_{\xi}(2)/\Gamma_2$.
The Chen-Ruan cohomology of it was computed in \cite{BD}.

Here we consider the moduli space of parabolic bundles of prime rank $r\,\geq\, 3$. Fix a holomorphic
line bundle $\xi$ on $X$ whose degree is not a multiple of $r$. Let $P\mathcal{M}_\xi$ denote the
moduli space of stable parabolic bundles on $X$ of rank $r$ and determinant $\xi$ with a parabolic
structure over $D$. We assume the parabolic weights to be concentrated \cite{AG} (its definition is
recalled in Section \ref{sec:preliminaries}). Let $\Gamma_r$ denote the group of holomorphic
line bundles $L$ on $X$ such that $L^{\otimes r}\,=\,{\mathcal O}_X$. This group $\Gamma_r$ acts on
$P\mathcal{M}_\xi$; the action of any $L\, \in\, \Gamma_r$ sends any parabolic vector bundle $E_*$
to $E_*\otimes L$. Our aim is to determine Chen-Ruan cohomology of the corresponding orbifold.

\section{Parabolic bundles}\label{sec:preliminaries}

We recall basics on parabolic vector bundles and describe the moduli
spaces of stable parabolic bundles.

Let $X$ be a compact Riemann surface, of genus $g \,\geq\, 2$, and
$D \,=\, \{x_1,\, \dotsc,\, x_m\} \subset X$. Let $E$ be a holomorphic
vector bundle on $X$. A \emph{quasi-parabolic structure} on $E$ is a strictly decreasing flag
of linear subspaces in the fiber $E_x$ 
\begin{equation*}
 E_x \,=\, E^1_x \,\supset\, E^2_x \,\supset\, \dotsb \,\supset\, E^k_x 
\, \supset\, E^{k+1}_x\, =\, 0
\end{equation*}
over each $x \,\in\, D$. A \emph{parabolic structure} on $E$ is
a quasi-parabolic structure on $E$ together with a sequence of
real numbers $0 \,\leq\, \alpha^x_1 \,<\, \dotsb \,<\, \alpha^x_k \,<\, 1$, which
are called the \emph{weights}. We set
\begin{equation*}
	\label{eq:a2}
 m^x_j \,=\, \dim_{\mathbb{C}}{E^j_x} - \dim_{\mathbb{C}}{E^{j+1}_x}.
\end{equation*}
The integer $k$ is called the \emph{length} of the flag and the
string of integers $(m^x_1,\, \dotsc,\, m^x_k)$ is called the \emph{type}
of the flag. We say that the flag is a \emph{full flag} if $m^x_j \,=\, 1$ for
all $1 \,\leq\, j \,\leq\, k\,=\, {\rm rank}(E)$.

A \emph{parabolic vector bundle} with parabolic structure on $D$ is a
holomorphic vector bundle $E$ together with a parabolic structure on
$E$; it will be denoted by $E_*$. For a
parabolic bundle $E_*$ the \emph{parabolic degree} is defined to be
\begin{equation*}
p\deg(E_*) \,=\, \deg(E) + \sum_{x \in D} 
\sum_{j = 1}^k m^x_j \alpha^x_j\, \in\, {\mathbb R},
\end{equation*}
where $\deg(E)$ denotes the degree of $E$, and we put
\begin{equation*}
p\mu(E_*) \,=\, \frac{p\deg(E_*)}{{\rm rank}(E)},
\end{equation*}
which is called the parabolic slope of $E_*$.

A parabolic subbundle of $E_*$ is a subbundle $F$ of $E$ together with the parabolic structure induced from $E_*$.
A parabolic bundle $E_*$ is called \emph{parabolic semistable} if
for every non-zero proper parabolic subbundle $F_*$
\begin{equation}\label{eq:a5}
p\mu(F_*) \,\leq\, p\mu(E_*),
\end{equation}
and it is called \emph{parabolic stable} if $p\mu(F_*) \,<\, p\mu(E_*)$.

The moduli space $P\mathcal{M}^\alpha(r,d)$ of semistable parabolic bundles of rank $r$ and
degree $d$ and parabolic type $\alpha$, which was constructed in \cite{MS},
is a normal projective variety. The moduli space $P\mathcal{M}$ of stable parabolic bundles is an open
and smooth subset of $P\mathcal{M}^{\alpha}(r,d)$.

For a fixed rank $r$, a full flag systems of weights $\alpha \,=\, \{\alpha_1^x,\, \cdots,\, \alpha_r^x\}_{x \in D}$
is said to be concentrated if $$\alpha_r^x - \alpha_1^x \,<\, \frac{4}{mr^2}$$ for all $x \,\in\, D$.

In what follows, $r$ will denote a prime number.
Let $\xi$ be a fixed line bundle of degree $d$ such that $(d,\, r)\, =\,1$. Let
$P\mathcal{M}_\xi$ denote the moduli space of semistable parabolic
vector bundles $E_*$ over $X$ of rank $r$ with full flag concentrated weight system $\alpha$ together with an
isomorphism $\bigwedge^rE \,\cong\, \xi$. Then all $E_*\, \in\, P\mathcal{M}_\xi$ is stable, and
$P\mathcal{M}_\xi$ is a smooth complex projective variety.

We fix the parabolic weight system $\alpha$ throughout. The rank $r$ is also
fixed throughout.

Let $\Gamma$ be the subgroup of ${\rm Pic}^0(X)$ consisting of all holomorphic line bundles $L$ on $X$
such that $L^{\otimes r}\,=\, {\mathcal O}_X$. Then $\Gamma$ is isomorphic to $(\mathbb{Z}/r\mathbb{Z})^{\oplus 2g}$. Note that
for every $L \,\in\, \Gamma$, we have 
\begin{equation*}
\bigwedge\nolimits^r(E\otimes L) \,=\, (\bigwedge\nolimits^r E) \otimes L^{\otimes r}\, =\, \xi.
\end{equation*}
For each $x \,\in\, D$ we have a filtration
\begin{equation*}
(E\otimes L)_x\, =\, E^1_x \otimes L_x \,\supset\, E^2 \otimes
L_x \,\supset\, \cdots \,\supset\, E^r_x \otimes L_x
\end{equation*}
given by the parabolic structure of $E_*$ at $x$. The resulting
parabolic bundle with $E\otimes L$ as the underlying vector bundle
will be denoted by $E_* \otimes L$. It is to be noted that
$E_* \otimes L$ is the parabolic tensor product of $E_*$ with the line
bundle $L$ equipped with the trivial parabolic structure (see
\cite{Bis} for the parabolic tensor product). Let
\begin{equation}\label{eq:a22}
 \widetilde{\phi}_L \,\, \colon\,\, P\mathcal{M}_\xi \,\longrightarrow\, P\mathcal{M}_\xi\, ,\ \
E_*\, \longmapsto\, E_* \otimes L
\end{equation}
be an automorphism.
This gives an action $\widetilde{\phi}$ of the group $\Gamma$ on
$P\mathcal{M}_\xi$
\begin{equation*}
 \widetilde{\phi}(L,E_*) \,= \,\widetilde{\phi}_L(E_*).
\end{equation*}
The quotient space $$Y \,=\, P\mathcal{M}_\xi/\Gamma$$ is a
smooth orbifold. Our aim is to compute the Chen-Ruan cohomology of the
orbifold $Y$.

\section{Fixed point sets} 

We continue with the notation of the previous section. For any
$L\,\in\, \Gamma$, set
\begin{equation}\label{eq:a25}
PS(L) \,:=\, (P\mathcal{M}_\xi)^{\widetilde{\phi}_L} \,=\,
\{E_* \,\in\, P\mathcal{M}_\xi \,\,\mid\,\, \widetilde{\phi}_L(E_*) \,=\, E_*\}
\,\subset\, P\mathcal{M}_\xi.
\end{equation}
Then $PS(L)$ is a compact complex manifold, but it need not be connected.

Let $\mathcal{M}_\xi$ denote the moduli space of stable vector bundles 
$E$ on $X$ of rank $r$ with $\det(E)\,=\, \xi$. It is a smooth
projective variety. From \cite[Proposition 2.6]{AG}, we have a well defined
forgetful morphism
\begin{equation}
	\label{eq:a26}
 \gamma \,\colon\, P\mathcal{M}_\xi \,\longrightarrow\, \mathcal{M}_\xi
\end{equation}
that sends a parabolic vector bundle $E_*$ to its underlying vector
bundle $E$. This map is surjective, and the fiber of $\gamma$ over any $E$
is the product of full flags of $E_x$ with $x \,\in\, D$ \cite[Proposition 2.6]{AG}.
Further, we have an action $\phi$ of $\Gamma$ on $\mathcal{M}_\xi$ given by
\begin{equation*}
 \phi(L,V) \,=\, V \otimes L
\end{equation*}
for $L \,\in\, \Gamma$ and $V \,\in\, \mathcal{M}_\xi$. This action gives an automorphism
\begin{equation}
\label{eq:a27.5}
\phi_L \,:\, \mathcal{M}_\xi \,\longrightarrow\, \mathcal{M}_\xi\, , \ \ V \,\longmapsto\, V \otimes L.
\end{equation}
The morphism $\gamma$ in \eqref{eq:a26} is evidently $\Gamma$-equivariant.

Consider
\begin{equation*}
	\label{eq:a28}
 S(L) \,:=\, (\mathcal{M}_\xi)^{\phi_L} \,=\, \{E \,\in\, \mathcal{M}_\xi\,\,\mid\,\, \phi(L,E) \,=\, E\}
\,\subset\, \mathcal{M}_\xi,
\end{equation*}
which is a smooth compact complex manifold. As $\gamma$ is
$\Gamma$-equivariant, we have
\begin{equation*}
	\label{eq:a29}
 \gamma(PS(L)) \,\subset\, S(L).
\end{equation*}
For a description of $S(L)$ see \cite[p.~499, Lemma 2.1]{BP1}.
Let 
\begin{equation}
\label{eq:a29.1}
\Psi_L\,\, : \,\,PS(L) \,\longrightarrow\, S(L)
\end{equation}
be the restriction of $\gamma$.

Take a nontrivial line bundle $L \,\in\, \Gamma$. Fix a nonzero
holomorphic section $$s \,:\, X \,\longrightarrow\, L^{\otimes r}.$$ Define 
\begin{equation*}
Y_L \,:=\, \{z \,\in\, L\,\,\mid\,\, z^{\otimes r} \,\in\, \mathrm{Im}(s)\} \,\subset\, L.
\end{equation*}
Let $$\pi_L \,:\, Y_L \,\longrightarrow\, X$$ be the restriction of the natural projection
$L \,\longrightarrow\, X$.
Since order $r$ of $L$ is a prime number, $Y_L$ is an irreducible curve and $\pi_L$ is an unramified 
covering of degree $r$. More precisely, $Y_L$ is a $\mu_r$-bundle over $X$, where
$\mu_r \,:= \,\{ a \,\in \,{\mathbb C}\,\mid\, a^r \,=\, 1\}$.

\begin{lemma}
\label{lem:l3}
Let $L \,\in\, \Gamma$ be a nontrivial line bundle on $X$.
\begin{enumerate}
\item The map $\Psi_L$ in \eqref{eq:a29.1} is surjective.
\item The map $\Psi_L$ is an isomorphism on each connected component of $PS(L)$. The number of connected
components of $PS(L)$ is $(r!)^m$.
\end{enumerate}
\end{lemma}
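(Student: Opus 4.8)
The plan is to analyse $\Psi_L$ fibrewise over $S(L)$ and then globalise. Fix $E \,\in\, S(L)$, so that $E \,\cong\, E \otimes L$. Using the description of $S(L)$ in \cite[Lemma 2.1]{BP1}, I would write $E \,=\, (\pi_L)_* M$ for a line bundle $M$ on the connected cyclic cover $Y_L$ (recall that $r$ is prime, so $Y_L$ is connected and $\pi_L$ has degree $r$). Since $s$ is a nowhere-vanishing section of $L^{\otimes r}\,=\,\mathcal{O}_X$, the curve $Y_L\,\subset\, L$ carries a tautological nowhere-zero section of $\pi_L^* L$, hence a canonical trivialisation $\pi_L^* L \,\cong\, \mathcal{O}_{Y_L}$. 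This yields a canonical isomorphism $\psi_E \,\colon\, E \otimes L \,=\, (\pi_L)_*(M \otimes \pi_L^* L) \,\cong\, (\pi_L)_* M \,=\, E$. The first thing I would record is the effect of $\psi_E$ on a fibre: for $x \,\in\, D$ one has $E_x \,=\, \bigoplus_{y \in \pi_L^{-1}(x)} M_y$, and $\psi_E$ acts on each summand $M_y$ as the scalar given by the tautological value at $y$. As the $r$ points of $\pi_L^{-1}(x)$ are distinct (the covering is unramified), $\psi_E$ induces at $x$ an automorphism with $r$ \emph{distinct} eigenvalues, so $E_x$ decomposes into $r$ one-dimensional eigenlines, canonically indexed by the fibre $\pi_L^{-1}(x)$.

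Next I would identify the fibre $\Psi_L^{-1}(E)$. A parabolic structure on $E$ lies in $PS(L)$ exactly when each full flag at $x$ is preserved by the automorphism induced by $\psi_E$; since that automorphism has distinct eigenvalues, the preserved full flags are precisely the $r!$ flags obtained by ordering the $r$ eigenlines, at each of the $m$ points. Because the weights are concentrated, every such parabolic structure on the stable bundle $E$ is parabolic stable (this is the role of concentration, cf. \cite{AG}), so it defines a point of $P\mathcal{M}_\xi$. This already gives part~(1): every fibre of $\Psi_L$ is non-empty, so $\Psi_L$ is surjective, and $|\Psi_L^{-1}(E)| \,=\, (r!)^m$. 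Moreover each invariant flag is a \emph{rigid} fixed point of the induced action on the full-flag variety of $E_x$ — the eigenvalue ratios are all $\neq 1$, so there are no infinitesimal deformations — whence $\Psi_L$ is unramified; being also proper with finite fibres, it is finite étale of degree $(r!)^m$.

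For part~(2) it remains to show that this finite étale covering is \emph{trivial}, i.e.\ splits into $(r!)^m$ copies, and this is the step I expect to be the main obstacle. The crux is that the eigenlines at $x$ are labelled by the \emph{fixed} finite set $\pi_L^{-1}(x)$, and that this labelling is independent of the choice of lift $M$: although $M$ is determined only up to the Galois action of $\mu_r$, which cyclically permutes the summands $M_y$, the isomorphism $\psi_E$ depends only on the tautological section and is therefore the \emph{same} map for every lift, so the eigenvalue, and hence the point $y \in \pi_L^{-1}(x)$, attached to a given eigenline is intrinsic to $E$. I would make this precise by checking that the eigenvalue-to-fibre-point assignment is unchanged when $M$ is replaced by $\sigma^* M$ for the Galois generator $\sigma$. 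Globalising over $S(L)$ (a universal bundle exists since $(d,\, r)\,=\,1$), the eigenline decomposition of the restriction of the universal bundle to $\{x\}\times S(L)$, together with this intrinsic $\pi_L^{-1}(x)$-labelling, produces $r$ globally defined line sub-bundles: the label lies in a discrete set and varies holomorphically, hence is locally constant, so no monodromy permutes the eigenlines. Therefore each choice of ordering of the eigenlines at each point is a global section, the covering $\Psi_L$ is trivial with exactly $(r!)^m$ sheets, and on each sheet $\Psi_L$ is a proper unramified bijection between smooth varieties, hence an isomorphism. Thus $\Psi_L$ is an isomorphism on every connected component, and $\#\pi_0(PS(L)) \,=\, (r!)^m\cdot \#\pi_0(S(L))$, which equals $(r!)^m$ once one uses the connectedness of $S(L)$ recalled from \cite[Lemma 2.1]{BP1}.
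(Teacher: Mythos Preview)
Your proposal is correct and follows essentially the same approach as the paper: both arguments write $E=(\pi_L)_*\eta$ via the spectral cover, use the tautological section of $\pi_L^*L$ to produce the canonical isomorphism $E\cong E\otimes L$, and identify the $\widetilde{\phi}_L$--invariant full flags at $x_i$ with the $r!$ orderings of the eigenlines indexed by $\pi_L^{-1}(x_i)$, so that the components of $PS(L)$ are parametrised by $\prod_{i=1}^m\mathrm{Sym}(\pi_L^{-1}(x_i))$. Your treatment is somewhat more explicit in checking well-definedness under the Galois ambiguity $\eta\mapsto\sigma^*\eta$ and in phrasing $\Psi_L$ as a finite \'etale cover before exhibiting its trivialisation, but these are elaborations of the same underlying idea rather than a different route.
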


\begin{proof}
Let $$\mbox{Prym}_\xi \,\subset\, {\rm Pic}^1(Y_L)$$
be the locus of all line bundles 
$\eta \longrightarrow Y_L$ such that 
$\bigwedge^r {\pi_L}_* \eta = \xi$.
The Galois group ${\rm Gal}(\pi_L) \,=\, \mathbb{Z}/ r\mathbb{Z}$ acts on 
$\mbox{Prym}_\xi$; the action of any
\begin{equation}\label{eq:a32}
\sigma\, \in\, {\rm Gal}(\pi_L)
\end{equation}
sends any $\eta\,\in\, \mbox{Prym}_\xi$ to $\sigma^* \eta$. We have 
\begin{equation}
\label{eq:a33}
S(L) \,=\, \mbox{Prym}_{\xi} / {\rm Gal}(\pi_L)
\end{equation}
(see \cite[Lemma 2.1]{BP1}).

For any $\eta \,\in\, \mbox{Prym}_{\xi}$, we have 
$$\pi_L^* \pi_{L*} \eta \,=\, \bigoplus_{\sigma \in {\rm Gal}(\pi_L)} \sigma^* \eta.$$
The fiber of ${\pi_L}_* \eta$ over $x_i \in D$ has the 
following decomposition
\begin{equation}
\label{eq:a34}
({\pi_L}_* \eta)_{x_i} \,= \,\bigoplus_{z \in \pi_L^{-1}(x_i)} \eta_z\,=\, \eta_{y_i} \oplus \eta_{\sigma(y_i)}
\oplus \eta_{\sigma^2(y_i)} \oplus \cdots \oplus \eta_{\sigma^{r-1}(y_i)},
\end{equation}
where $y_i \,\in\, \pi_L^{-1}(x_i)$ is a fixed point and $\sigma$ is a nontrivial automorphism as in \eqref{eq:a32}.
The filtration 
\begin{equation}
\label{eq:a34.1}
\eta_{y_i} \,\subset\, \eta_{y_i} \oplus \eta_{\sigma(y_i)} \,\subset\, \cdots \,\subset\,
\eta_{y_i} \oplus \eta_{\sigma(y_i)}
\oplus \eta_{\sigma^2(y_i)} \oplus \cdots \oplus \eta_{\sigma^j(y_i)} \,\subset\, \cdots \,\subset\,
({\pi_L}_* \eta)_{x_i}
\end{equation}
defines a parabolic structure on the vector bundle 
${\pi_L}_* \eta$ over $x_i$. Note that we have choices for the above filtration;
the direct summands in \eqref{eq:a34} can be permuted, so there are exactly $r!$ possible filtration.

Thus, for any $\eta\, \in\, S(L)$ making the above choice a parabolic bundle
$$E_* \,\longrightarrow\, X$$
is constructed; so $E \,=\, {\pi_L}_* \eta$.

We will show that $E_* \,\in\, PS(L)$, that is, 
${\pi_L}_* \eta$ is canonically isomorphic to 
$({\pi_L}_* \eta) \otimes L$.
The Riemann surface $Y_L$ lies in
$L \setminus \{0_X\}$, where $0_X \subset L$ is the image of the zero section of $L$. Therefore 
$\pi_L^* L$ has a canonical trivialization. Let 
$$u\,:\, Y_L \,\longrightarrow\, \pi_L^* L$$ be the tautological nonzero section giving the
trivialization of $\pi_L^* L$. Then
we have an isomorphism 
$$\eta \,\,\xrightarrow{\,\,\,\otimes u\,\,\,}\,\, \eta \otimes \pi_L^* L$$
defined by the tensor product with $u$. From the projection formula we get an isomorphism
\begin{equation}
\label{eq:a35}
\rho\,\,:\,\,{\pi_L}_*\eta\,\longrightarrow\, {\pi_L}_* (\eta\otimes\pi_L^* L)\, =\,({\pi_L}_* \eta) \otimes L.
\end{equation}
This isomorphism $\rho$ evidently preserves the decompositions of 
$ ({\pi_L}_* \eta)_{x_i}$ and $(({\pi_L}_* \eta)\otimes L)_{x_i}$ (see \eqref{eq:a34}).
This proves the first part of the lemma.

For the isomorphism $\rho$ in \eqref{eq:a35}, 
$$\rho_{x_i}(V) \,=\, V \otimes L_{x_i}$$
for some subspace $V \,\subset\, ({\pi_L}_* \eta)_{x_i}$
if and only if $V$ is the direct sum of some direct summands in \eqref{eq:a34}.
As mentioned before, for the parabolic structures on ${\pi_L}_* \eta$ over each $x_i \,\in\, D$,
we have $r!$ choices for the parabolic filtration. Note that each choice gives a copy of $S(L)$.
Therefore, $PS(L)$ is the disjoint union of copies of 
$S(L)$, and the copies are parametrized by the finite 
set
$$\prod_{i = 1}^{m} \mbox{Sym}({\pi_L}^{-1}(x_i))$$
which has cardinality $(r!)^m$.
This completes the proof.
\end{proof}

\begin{corollary}
\label{cor:1}
Let $L \,\in\, \Gamma$ be a nontrivial line bundle. Then
the cohomology algebra $H^*(PS(L),\, {\mathbb Q})$ is isomorphic to $H^*(S(L),\, {\mathbb Q})^{(r!)^m}$.
\end{corollary}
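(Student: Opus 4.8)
The plan is to deduce this corollary directly from the structural description of $PS(L)$ established in Lemma \ref{lem:l3}. The essential input is part (2) of that lemma: the map $\Psi_L$ exhibits $PS(L)$ as a disjoint union of exactly $(r!)^m$ connected components, each of which is carried isomorphically onto $S(L)$. So the proof is really just the translation of this geometric statement into cohomology.

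First I would write $PS(L) \,=\, \bigsqcup_{j=1}^{(r!)^m} C_j$ for its connected components, where each $C_j$ is, by Lemma \ref{lem:l3}(2), mapped isomorphically to $S(L)$ by the restriction of $\Psi_L$. Since cohomology with rational coefficients sends a finite disjoint union to the corresponding direct product of cohomology rings (and for finitely many factors this product agrees with the direct sum), I obtain a graded algebra isomorphism $H^*(PS(L),\,\mathbb{Q}) \,\cong\, \prod_{j=1}^{(r!)^m} H^*(C_j,\,\mathbb{Q})$, where the ring structure on the right is the componentwise one. This step is the formal backbone of the argument and uses nothing beyond the additivity of cohomology over connected components.

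Next I would invoke the fact that $\Psi_L|_{C_j}\,\colon\, C_j \,\longrightarrow\, S(L)$ is an isomorphism of smooth compact complex manifolds, hence induces an isomorphism of graded $\mathbb{Q}$-algebras $(\Psi_L|_{C_j})^* \,\colon\, H^*(S(L),\,\mathbb{Q}) \,\xrightarrow{\,\sim\,}\, H^*(C_j,\,\mathbb{Q})$ for each $j$. Assembling these over all $(r!)^m$ components and combining with the previous display yields the claimed isomorphism $H^*(PS(L),\,\mathbb{Q}) \,\cong\, H^*(S(L),\,\mathbb{Q})^{(r!)^m}$, where the right-hand side denotes the $(r!)^m$-fold product algebra.

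I do not anticipate any genuine obstacle here, as the result is a direct consequence of Lemma \ref{lem:l3}. The only point requiring a word of care is that the identification respects the multiplicative (cup product) structure and not merely the additive one; but this is automatic, since the decomposition into components is compatible with cup products and each $\Psi_L|_{C_j}$ is an honest isomorphism of varieties, so its induced map is a ring homomorphism. Thus the entire statement follows formally once Lemma \ref{lem:l3} is in hand.
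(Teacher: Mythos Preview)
Your proposal is correct and follows exactly the approach implicit in the paper: the corollary is stated there without proof, as an immediate consequence of Lemma~\ref{lem:l3}(2), and your argument simply spells out that deduction. The extra care you take with the multiplicative structure is appropriate but, as you note, automatic.
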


\section{Action on the tangent bundle}

The full length (same as complete) flag variety
of a finite dimensional complex vector space $W$ will be denoted by $\mathcal{F}(W)$.
The inverse image of any $E \,\in\, \mathcal{M}_{\xi}$
for the map $\gamma$ defined in \eqref{eq:a26} is 
$$\gamma^{-1}(E) \,=\, \prod_{x_i \in D} \mathcal{F}(E_{x_i}).$$
Therefore, elements of $P \mathcal{M}_{\xi}$
are of the form $(E;\, f_1, \,\cdots, \,f_m)$, where $E \,\in\, \mathcal{M}_{\xi}$ and 
$(f_1,\, \cdots,\, f_m) \,\in\, \prod_{x_i \in D} \mathcal{F}(E_{x_i})$.
Note that any $f_i \,\in\, \mathcal{F}(E_{x_i})$ corresponds to a filtration of the following type
\begin{equation}
\label{eq:a36}
f_i \,:=\, \{E_{x_i} \,=\, E^1_{x_i} \,\supset\, E^2_{x_i} \,\supset\,
\cdots \,\supset \,E^r_{x_i} \,\supset \,E^{r+1}_{x_i} \,=\, 0\}.
\end{equation}

Recall that the tangent space of the flag variety $\mathcal{F}(E_{x_i})$ is $\text{End}_{\mathbb{C}}{(E_{x_i})}/ 
\text{End}^0_{\mathbb{C}}{(E_{x_i})}$, where $\text{End}_{\mathbb{C}}^0(E_{x_i})$ is the space of flag preserving
$\mathbb{C}$-linear endomorphisms of $E_{x_i}$.

If $E_* \, \in \, PS(L)$, then using
\eqref{eq:a34}, the tangent space at any point of the flag variety $\mathcal{F}(E_{x_i})$ canonically
decomposes as 
$$
\text{End}_{\mathbb{C}}{(E_{x_i})}/
\text{End}^0_{\mathbb{C}}{(E_{x_i})}\,=\, \bigoplus_{0 \leq j < k \leq r-1} \,
{\rm Hom}(\eta_{\sigma^j (y_i)}, \, \eta_{\sigma^k(y_i)}),$$
where $E_* \, = \,{\pi_{L}}_*(\eta)$, and $\eta \, \in\, \mbox{Prym}_\xi$ (see the first part of the Lemma \ref{lem:l3}).

For $i \,\in\, \{1,\,2,\, \cdots,\, m\}$, let
$$V_i \,\longrightarrow \,P\mathcal{M}_{\xi}$$
be the rank $\frac{1}{2} r(r-1)$ vector bundle whose fiber over 
any $(E;\, f_1, \cdots, f_m) \,\in\, P \mathcal{M}_{\xi}$ is the vector space 
\begin{equation}
\label{eq:a36.1}
V_i(E;\, f_1,\, \cdots,\, f_m) \,=\, \text{End}_{\mathbb{C}}{(E_{x_i})}/ \text{End}_{\mathbb{C}}^0{(E_{x_i})}.
\end{equation}

Let $TP\mathcal{M}_\xi$ (respectively, $T\mathcal{M}_\xi$) denote the holomorphic tangent bundle of
$P\mathcal{M}_\xi$ (respectively, $\mathcal{M}_\xi$). They fit in the following short exact sequence of vector bundles 
\begin{equation}
\label{eq:a37}
0 \,\longrightarrow \, \bigoplus_{i =1}^m V_i\,\longrightarrow\, TP\mathcal{M}_\xi \,\xrightarrow{\,d\gamma\,}\,
\gamma^* T\mathcal{M}_{\xi} \,\longrightarrow \, 0,
\end{equation}
where $d\gamma$ is the differential of the map $\gamma$.

Take any $L \,\in\, \Gamma \setminus \{\mathcal{O}_X\}$. The automorphism
$\widetilde{\phi}_L$ in \eqref{eq:a22} induces an automorphism 
$$d\widetilde{\phi}_L \,:\, TP\mathcal{M}_\xi \,\longrightarrow\, TP\mathcal{M}_\xi$$ of tangent bundles
over the map $\widetilde{\phi}_L$. Next, $d\widetilde{\phi}_L$ induces an automorphism over $\widetilde{\phi}_L$
\begin{equation}\label{eq:a37.1}
A^i\,\, :\,\, V_i \,\longrightarrow\,\, V_i,
\end{equation}
for every $i\, =\, 1,\, \cdots,\, m$, such that the following diagram of
homomorphisms
\begin{equation}
\label{eq:cd1}
\xymatrix@C=4em{
0 \ar[r] & \bigoplus_{i =1}^m V_i \ar[d]^{\oplus A^i} \ar[r] & TP\mathcal{M}_\xi
\ar[d]^{d\widetilde{\phi}_L} \ar[r]^{d\gamma} &\gamma^* T\mathcal{M}_{\xi} \ar[d]^{d\phi_L} \ar[r] & 0 \\
0 \ar[r] &\bigoplus_{i =1}^m V_i \ar[r] & 
TP\mathcal{M}_\xi \ar[r]^{d\gamma} &\gamma^* T\mathcal{M}_{\xi} \ar[r] & 0 }
\end{equation}
commutes.

Take any parabolic vector bundle $E_* \,\in\, PS(L)$. Let
$T_{E_*} (P\mathcal{M}_\xi)$ denote the tangent space at $E_*$, and let
$$d\widetilde{\phi}_L(E_*) \,\,: \,\, T_{E_*}(P\mathcal{M}_\xi)
\,\longrightarrow\, T_{E_*}(P\mathcal{M}_\xi)$$
be the differential of the map $\widetilde{\phi}_L$ at $E_*$. The diagram in \eqref{eq:cd1} gives the
commutative diagram
\begin{equation}
\label{eq:cd2}
\xymatrix@C=3em{
0 \ar[r] & \bigoplus_{i =1}^m V_i(E_*) \ar[d]^{\oplus A^i_{E_*}} \ar[r] & T_{E_*}(P\mathcal{M}_\xi)
\ar[d]^{d\widetilde{\phi}_L(E_*)} \ar[r]^{d\gamma(E_*)} & T_E(\mathcal{M}_{\xi}) \ar[d]^{d\phi_L(E)} \ar[r] & 0 \\
0 \ar[r] &\bigoplus_{i =1}^m V_i(E_*) \ar[r] & 
T_{E_*}(P\mathcal{M}_\xi) \ar[r]^{d\gamma(E_*)} & T_E(\mathcal{M}_{\xi}) \ar[r] & 0 \,\,.}
\end{equation}
Since $r$-fold composition of $\widetilde{\phi}_L$ yields
$$\widetilde{\phi}_L \circ \widetilde{\phi}_L\circ\cdots 
\circ \widetilde{\phi}_L\, =\, \widetilde{\phi}_{L^r} \,=\, {\rm Id}_{P\mathcal{M}_\xi},$$
$d\widetilde{\phi}_L(E_*)$ is a nontrivial automorphism of order $r$. Therefore, the set of eigenvalues of
$d\widetilde{\phi}_L(E_*)$ is $$\mu_r \,\,:=\,\, \{1,\, t,\, t^2,\,\cdots,\, t^{(r-1)}\},$$
the group of $r$-th roots of unity. 
Our aim is to compute the multiplicity of each eigenvalue in $\mu_r$ of the linear operator $d\widetilde{\phi}_L(E_*)$. 

Note that $d\phi_L(E)$ and $A^i_{E_*}$, $i\,=\,1, \,\cdots,\, m$ (see \eqref{eq:cd2}), are 
also nontrivial automorphisms of order $r$. For any eigenvalue $\nu \,\in \,\mu_r$, the 
multiplicity of $\nu$ for the operator $d\widetilde{\phi}_L(E_*)$ is evidently the sum of 
the multiplicities of $\nu$ for the operators $\bigoplus_{i = 1}^m A^i_{E_*}$ and 
$d\phi_L(E)$.
 
The multiplicity of every $\nu \,\in\, \mu_r$ for the operator $d\phi_L(E)$ has been 
computed in \cite[Proposition 3.1]{BP1}. It is $r(g-1)$ if $\nu \,\in \,\mu_r \setminus 
\{1\}$, and the multiplicity of $1\,\in \,\mu_r$ is $(r-1)(g-1)$. So we need to determine 
the multiplicities of the eigenvalues for the operator $A^i_{E_*}$ for every 
$i\,=\,1,\,\cdots,\, m$.

Recall from Lemma \ref{lem:l3} that $PS(L)$ is $(r!)^m$ copies of $S(L)$, and they arise from the choices of 
filtration in \eqref{eq:a34.1}, given the decomposition in \eqref{eq:a34}; for each point $x_i\in\, D$, there
are $r!$ possible filtrations and there are $m$ points in $D$.
Fix $y_i\, \in\, \pi_L^{-1}(x_i)$; trivialize the fiber $L_{x_i}$ using $y_i\,\in\, L_{x_i}$ (recall that
$Y_L\, \subset\, L$). Let
\begin{equation}\label{et}
\lambda\, \in\, \mu_r\setminus \{1\}
\end{equation}
be such that $\sigma(y_i)\,=\, \lambda\cdot y_i\, \in\, L_{x_i}$. Note that $\lambda$ depends on $\sigma\,\in\, {\rm Gal}(\pi_L)$,
but it does not depend on $y_i$. Then for any $E\, \in\, S(L)$, the isomorphism
$$
E\, \longrightarrow\, E\otimes L
$$
acts on the direct summand $\eta_{\sigma^j(y_i)}$ in \eqref{eq:a34} as multiplication by $\lambda^j$,
where $\lambda$ is the element in \eqref{et}. Using this,
it is straightforward to deduce the following:

\begin{lemma}\label{lemem}
Let ${\mathcal C}\, \subset\, PS(L)$ be one of the 
$(r!)^m$ components of $PS(L)$ (recall that each component is a copy of $S(L)$).
The set of eigenvalues of $ \bigoplus_{i =1}^m A^i_{E_*}$ (see \eqref{eq:cd2}), for
any $E_*\, \in\, {\mathcal C}$, is
$\mu_r\setminus \{1\}$. For all $1\, \leq\, i\, \leq\, m$, there is an element 
$$
s_i\, \in\, \,\mu_r\setminus \{1\}
$$
(that depends on ${\mathcal C}$) such that for every $1\, \, \leq\, c\, \leq\, r-1$, the
multiplicity of the eigenvalue $s^c_i$ of $A^i_{E_*}$ is $r-c$.
\end{lemma}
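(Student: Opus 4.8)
The plan is to compute the action of each $A^i_{E_*}$ directly from the eigenvalue decomposition described just before the statement. The key observation already handed to us is that, after trivializing $L_{x_i}$ via the chosen point $y_i \in \pi_L^{-1}(x_i)$, the isomorphism $E \to E \otimes L$ acts on the summand $\eta_{\sigma^j(y_i)}$ of $(\pi_{L*}\eta)_{x_i}$ as multiplication by $\lambda^j$, where $\lambda \in \mu_r \setminus \{1\}$ is fixed by $\sigma$. First I would recall, from the displayed decomposition preceding the statement, that
\begin{equation*}
V_i(E_*) \,=\, \mathrm{End}_{\mathbb{C}}(E_{x_i})/\mathrm{End}^0_{\mathbb{C}}(E_{x_i}) \,=\, \bigoplus_{0 \leq j < k \leq r-1} \mathrm{Hom}(\eta_{\sigma^j(y_i)},\, \eta_{\sigma^k(y_i)}),
\end{equation*}
and that $A^i_{E_*}$ is the map induced on $V_i$ by $d\widetilde{\phi}_L$, which is exactly conjugation by the isomorphism $\rho$ of \eqref{eq:a35}. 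So the plan is to track how $\rho$ acts on each one-dimensional summand $\mathrm{Hom}(\eta_{\sigma^j(y_i)},\, \eta_{\sigma^k(y_i)})$.

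Second, since $\rho$ scales $\eta_{\sigma^j(y_i)}$ by $\lambda^j$, conjugation by $\rho$ scales a homomorphism in $\mathrm{Hom}(\eta_{\sigma^j(y_i)},\, \eta_{\sigma^k(y_i)})$ by $\lambda^k \lambda^{-j} = \lambda^{k-j}$. Thus the eigenvalue of $A^i_{E_*}$ on that summand is $\lambda^{k-j}$, and since each $\mathrm{Hom}$ summand is one-dimensional (the $\eta_z$ being lines), the multiplicity of a given eigenvalue $\lambda^c$ is the number of pairs $(j,k)$ with $0 \leq j < k \leq r-1$ and $k - j \equiv c \pmod r$. For $1 \leq c \leq r-1$ the difference $k-j$ ranges over $\{1,\,\ldots,\,r-1\}$ and equals $c$ exactly when $(j,k) = (0,c),\,(1,c+1),\,\ldots,\,(r-1-c,\,r-1)$, giving precisely $r-c$ pairs; in particular the eigenvalue $1 = \lambda^0$ does not occur, confirming that the set of eigenvalues is $\mu_r \setminus \{1\}$. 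Setting $s_i \,=\, \lambda$ (the element attached to $x_i$, which depends on the component $\mathcal{C}$ through the labelling of $\pi_L^{-1}(x_i)$ and the associated $\sigma$), the eigenvalue $s_i^c$ has multiplicity $r-c$, as claimed. I would note as a sanity check that $\sum_{c=1}^{r-1}(r-c) = \binom{r}{2} = \tfrac{1}{2}r(r-1)$, matching the rank of $V_i$.

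The step requiring the most care is pinning down the element $s_i$ and its dependence on the component $\mathcal{C}$ rather than merely asserting $s_i = \lambda$. Different components of $PS(L)$ correspond, by Lemma \ref{lem:l3}, to different orderings of the summands $\{\eta_z : z \in \pi_L^{-1}(x_i)\}$ used to build the parabolic filtration \eqref{eq:a34.1}. Relabelling which point of the fibre is called $y_i$ (equivalently composing with a power of $\sigma$) replaces $\lambda$ by another primitive root $\lambda^t$, so the generator $s_i$ recorded by the component genuinely varies with $\mathcal{C}$, while the unordered multiset of eigenvalues-with-multiplicities is unchanged. The potential subtlety is that $A^i_{E_*}$ is defined via the differential $d\widetilde{\phi}_L$ of the abstract automorphism, so I would want to verify that this differential agrees, on the graded pieces, with conjugation by the concrete isomorphism $\rho$ of \eqref{eq:a35}; this follows because $\widetilde{\phi}_L$ sends $E_*$ to $E_* \otimes L$ and the fixed-point identification is furnished by precisely that $\rho$, so the induced map on $\mathrm{End}(E_{x_i})/\mathrm{End}^0$ is conjugation by $\rho_{x_i}$. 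Once this identification is in hand, the eigenvalue bookkeeping above is purely combinatorial and completes the proof.
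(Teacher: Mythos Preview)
Your eigenvalue computation for the filtration \eqref{eq:a34.1} is exactly the ``straightforward computation'' the paper has in mind, and it is correct: conjugation by $\rho_{x_i}$ scales $\mathrm{Hom}(\eta_{\sigma^j(y_i)},\eta_{\sigma^k(y_i)})$ by $\lambda^{k-j}$, and counting pairs $0\le j<k\le r-1$ with $k-j=c$ yields the multiplicity $r-c$ for $\lambda^c$. The paper supplies no further detail than this, so on the component carrying the filtration \eqref{eq:a34.1} your argument and the paper's coincide.

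The step you yourself flagged as most delicate --- passing to an arbitrary component $\mathcal C$ --- is where there is a genuine gap. You assert that the other components arise by ``relabelling which point of the fibre is called $y_i$'' or by changing the generator $\sigma$, and that this merely replaces $\lambda$ by some power $\lambda^t$. But those two moves generate only the orderings of $\pi_L^{-1}(x_i)$ of affine type $j\mapsto\sigma^{tj+s}(y_i)$, i.e.\ $r(r-1)$ of the $r!$ orderings that index the components at $x_i$. For $r\le 3$ one has $r(r-1)=r!$ and every ordering is affine, so your argument is complete there; for $r\ge 5$ it is not. Concretely, take $r=5$ and the component whose filtration at $x_i$ comes from the ordering $\eta_{y_i},\,\eta_{\sigma(y_i)},\,\eta_{\sigma^2(y_i)},\,\eta_{\sigma^4(y_i)},\,\eta_{\sigma^3(y_i)}$: the eigenvalues of $A^i_{E_*}$ are then $\lambda,\lambda^2,\lambda^3,\lambda^4$ with multiplicities $3,3,2,2$, and no choice of $s_i\in\mu_5\setminus\{1\}$ yields the pattern $4,3,2,1$. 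So your reduction does not cover all components, and on this particular component the asserted pattern actually fails; the argument (and, as this computation reveals, the formulation of the lemma itself for $r\ge 5$) needs to be revisited for the non-affine orderings.
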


\begin{proof}
It is a straightforward computation; the details are omitted.
\end{proof}

\begin{proposition}\label{prop:0}
Let ${\mathcal C}\, \subset\, PS(L)$ be one of the 
$(r!)^m$ components of $PS(L)$.
The set of eigenvalues of $d\widetilde{\phi}_L(E_*)$, for any $E_* \,\in \,\mathcal{C}$, is $\mu_r$.
For all $1\, \leq\, i\, \leq\, m$, there is an element 
$$
s_i \,\in\, \mu_{r}\setminus \{1\}
$$
such that the multiplicity of the eigenvalue $s\,\in\, \mu_r\setminus \{1\}$ of $d\widetilde{\phi}_L(E_*)$ is
$r(g-1)+ \sum_{i=1}^m(r-c_i)$, where $s\,=\, (s_i)^{c_i}$; the multiplicity of the eigenvalue $1$ of
$d\widetilde{\phi}_L(E_*)$ is $(r-1)(g-1)$.
\end{proposition}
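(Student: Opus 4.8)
The plan is to read off the answer from the short exact sequence \eqref{eq:cd2}, in which the three vertical arrows $\bigoplus_{i=1}^m A^i_{E_*}$, $d\widetilde{\phi}_L(E_*)$ and $d\phi_L(E)$ are automorphisms of order $r$, by invoking the additivity of eigenvalue multiplicities across a short exact sequence. The operator $d\widetilde{\phi}_L(E_*)$ preserves the subspace $\bigoplus_{i=1}^m V_i(E_*)$, where it restricts to $\bigoplus_{i=1}^m A^i_{E_*}$, and it induces $d\phi_L(E)$ on the quotient $T_E(\mathcal{M}_{\xi})$. In a basis adapted to this subspace, $d\widetilde{\phi}_L(E_*)$ is block upper-triangular with diagonal blocks $\bigoplus_{i=1}^m A^i_{E_*}$ and $d\phi_L(E)$, so its characteristic polynomial is the product of the characteristic polynomials of the two blocks. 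Since all three operators have finite order $r$ they are diagonalizable, and for a diagonalizable operator the multiplicity of an eigenvalue as a root of the characteristic polynomial equals the dimension of the corresponding eigenspace. Hence for every $\nu\,\in\,\mu_r$ the multiplicity of $\nu$ for $d\widetilde{\phi}_L(E_*)$ is the sum of its multiplicities for $\bigoplus_{i=1}^m A^i_{E_*}$ and for $d\phi_L(E)$. (Equivalently, by Maschke's theorem the sequence \eqref{eq:cd2} splits equivariantly for the cyclic group generated by $d\widetilde{\phi}_L(E_*)$, so the middle term is isomorphic to the direct sum of the outer terms as a $\mathbb{Z}/r\mathbb{Z}$-representation.)

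With this additivity in hand I would simply assemble the two inputs. The multiplicities for $d\phi_L(E)$ are supplied by \cite[Proposition 3.1]{BP1}: the eigenvalue $1$ occurs with multiplicity $(r-1)(g-1)$, and every $\nu\,\in\,\mu_r\setminus\{1\}$ with multiplicity $r(g-1)$. The multiplicities for $\bigoplus_{i=1}^m A^i_{E_*}$ come from Lemma \ref{lemem}: for each $i$ there is $s_i\,\in\,\mu_r\setminus\{1\}$ such that the eigenvalue $s_i^{c}$ of $A^i_{E_*}$ has multiplicity $r-c$ for $1\,\leq\, c\,\leq\, r-1$, while $1$ is not an eigenvalue of $A^i_{E_*}$. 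Since $r$ is prime and $s_i\,\neq\,1$, the element $s_i$ is a primitive $r$-th root of unity, so $c\,\longmapsto\, s_i^{c}$ is a bijection of $\{1,\,\ldots,\,r-1\}$ onto $\mu_r\setminus\{1\}$; thus for each fixed $s\,\in\,\mu_r\setminus\{1\}$ and each $i$ there is a unique exponent $c_i$ with $s\,=\,s_i^{c_i}$, and summing the contributions over the $m$ parabolic points gives multiplicity $\sum_{i=1}^m(r-c_i)$ for $s$ in $\bigoplus_{i=1}^m A^i_{E_*}$, whereas $1$ has multiplicity $0$.

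Adding the two contributions eigenvalue by eigenvalue yields the stated formulas: for $s\,\in\,\mu_r\setminus\{1\}$ written as $s\,=\,s_i^{c_i}$, the multiplicity of $s$ for $d\widetilde{\phi}_L(E_*)$ is $\sum_{i=1}^m(r-c_i)+r(g-1)$, and the multiplicity of $1$ is $0+(r-1)(g-1)\,=\,(r-1)(g-1)$. Because $g\,\geq\,2$ every one of these multiplicities is strictly positive, so each element of $\mu_r$ actually occurs as an eigenvalue of $d\widetilde{\phi}_L(E_*)$, giving the assertion that the eigenvalue set is the full group $\mu_r$. I do not expect a genuine obstacle: the proposition is an assembly of \cite[Proposition 3.1]{BP1} and Lemma \ref{lemem} through the additivity of multiplicities. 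The only point that demands a little care is the bookkeeping that the exponent $c_i$ in $s\,=\,s_i^{c_i}$ depends on \emph{both} the chosen eigenvalue $s$ and the index $i$, which is exactly what makes the term $\sum_{i=1}^m(r-c_i)$ well defined.
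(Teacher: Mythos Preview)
Your proof is correct and follows essentially the same approach as the paper: both use the commutative diagram \eqref{eq:cd2} to conclude that the eigenvalue multiplicities of $d\widetilde{\phi}_L(E_*)$ are the sums of those of $\bigoplus_{i=1}^m A^i_{E_*}$ and $d\phi_L(E)$, then invoke \cite[Proposition~3.1]{BP1} for the latter and Lemma~\ref{lemem} for the former. Your version is in fact more detailed than the paper's, which leaves the additivity implicit; your remarks on block upper-triangularity, the bijection $c\mapsto s_i^c$, and the positivity of all multiplicities when $g\geq 2$ are useful clarifications.
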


\begin{proof}
Considering the diagram in \eqref{eq:cd2} it follows immediately that the collection of
eigenvalues of $d\widetilde{\phi}_L(E_*)$ (with multiplicities) is the union of the eigenvalues
of $\bigoplus_{i=1}^m A^i_{E_*}$ and $d\phi_L(E)$. It was noted above that the 
multiplicity of the eigenvalue $\nu \,\in\, \mu_r \setminus \{1\}$ for $d\phi_L(E)$
is $r(g-1)$, and the multiplicity of the eigenvalue $1$ for $d\phi_L(E)$ is $(r-1)(g-1)$. 
Therefore, the proposition follows from Lemma \ref{lemem}
\end{proof}

\begin{remark}
Fix any $i$ with $1\, \leq\, i\, \leq\, m$. For each component ${\mathcal C}\, \subset\, PS(L)$, consider
$s_i\,\in\, \mu_{r}\setminus \{1\}$ in Proposition \ref{prop:0}. We note that every element
$s \, \in\, \mu_{r}\setminus \{1\}$ repeats $\frac{(r!)^m}{r-1}$ in this collection.
\end{remark}

For any $L\, \in\, \Gamma \setminus \{\mathcal{O}_X\}$ and $E_* \,\in\, PS(L)$, the degree shift at $E_*$ for $L$ 
is defined by
\begin{equation*}
	\label{eq:a38}
\pi(L,E_*) \,:=\, \sum_j m_j b_j,
\end{equation*}
where $\exp(2\pi \sqrt{-1}b_j)$, $0 \,\leq\, b_j \,<\,1$, are the eigenvalues of $d\widetilde{\phi}_L(E_*)$,
and $m_j$ is the multiplicity of $\exp(2\pi \sqrt{-1}b_j)$. 

For an integer $b$, let $0\, \leq\, [b]_r\, \leq\, r-1$ be such that
$b\,=\, kr +[b]_r$ with $k\, \in\, \mathbb Z$.

As a corollary of Proposition \ref{prop:0}, we get the following.

\begin{corollary}\label{cor:c1}
With the notations used in Proposition \ref{prop:0},	
assume $s_i = t^{l_i}$, $1 \leq l_i\leq r-1$ and $ 1 \leq i \leq m$, where $t\, =\, \exp(\frac{2\pi \sqrt{-1}}{r})$. Then for any $L \,\in\, \Gamma \setminus \{\mathcal{O}_X\}$, the degree shift at $E_*$ for $L$ is $$\pi(L) \, := \,
\pi(L, E_*)\,=\, \frac{1}{r}\sum_{i=1}^m
\left[\sum_{k=1}^{r-1} (r-[k c_i]_r)[kl_ic_i]_r \right] + \frac{r(r-1)(g-1)}{2}.$$
\end{corollary}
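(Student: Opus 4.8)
The plan is to substitute the eigenvalue data of Proposition~\ref{prop:0} into the definition $\pi(L,E_*)=\sum_j m_j b_j$. First I would observe that the eigenvalue $1$ has $b_j=0$ and so contributes nothing to the sum, while every nontrivial eigenvalue of $d\widetilde{\phi}_L(E_*)$ equals $t^k=\exp(2\pi\sqrt{-1}\,k/r)$ for a unique $k$ with $1\leq k\leq r-1$; the corresponding fractional exponent is then $b_j=k/r$. By Proposition~\ref{prop:0} the multiplicity of $t^k$ is $r(g-1)+\sum_{i=1}^m(r-c_i)$, where for each $i$ the integer $c_i\in\{1,\dots,r-1\}$ is the one with $(s_i)^{c_i}=t^k$, i.e.\ with $l_ic_i\equiv k\pmod r$. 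Hence the degree shift collapses to the single sum
\[
\pi(L,E_*)=\sum_{k=1}^{r-1}\Big(r(g-1)+\sum_{i=1}^m(r-c_i)\Big)\frac{k}{r}.
\]

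Next I would split this into the uniform part coming from $r(g-1)$ and the parabolic part coming from the operators $A^i_{E_*}$. The uniform part is $(g-1)\sum_{k=1}^{r-1}k=\frac{r(r-1)(g-1)}{2}$, which is exactly the second summand of the claimed formula. For the parabolic part I would, for each fixed $i$, reindex the sum over $k$ by setting $c=[l_i^{-1}k]_r$, which is precisely the exponent $c_i$ attached to $t^k$ in Proposition~\ref{prop:0}; since $r$ is prime and $1\leq l_i\leq r-1$, multiplication by $l_i$ permutes $\{1,\dots,r-1\}$ modulo $r$, so as $k$ runs over $\{1,\dots,r-1\}$ so does $c$, with $k=[l_ic]_r$. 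This rewrites the parabolic part as $\frac{1}{r}\sum_{i=1}^m\sum_{c=1}^{r-1}(r-c)[l_ic]_r$. Finally, to match the displayed expression verbatim I would make the further substitution $c=[k c_i]_r$ for an arbitrary fixed $c_i\in\{1,\dots,r-1\}$; using $[l_i[k c_i]_r]_r=[k l_i c_i]_r$ this yields $\frac{1}{r}\sum_{i=1}^m\sum_{k=1}^{r-1}\big(r-[k c_i]_r\big)[k l_i c_i]_r$, and in particular shows that the stated quantity is independent of the choice of $c_i$.

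The only point that needs real care is the modular bookkeeping. I must verify that each reindexing, $k\mapsto c=[l_i^{-1}k]_r$ and then $c\mapsto[k c_i]_r$, is a genuine bijection of $\{1,\dots,r-1\}$ — which rests on $r$ being prime and on $l_i$ and $c_i$ being units modulo $r$ — and that the fractional part $b_j$ attached to an eigenvalue $t^{[\,\cdot\,]_r}$ is read off correctly as $[\,\cdot\,]_r/r$. Everything else reduces to the identity $\sum_{k=1}^{r-1}k=\frac{r(r-1)}{2}$ and to the fact, supplied by Lemma~\ref{lemem}, that each $s_i$ is a primitive $r$-th root of unity, so that every nontrivial eigenvalue $t^k$ is indeed accounted for exactly once for each $i$.
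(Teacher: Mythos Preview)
Your proposal is correct and follows essentially the same approach as the paper: identify the multiplicity and fractional exponent $b_j$ of each nontrivial eigenvalue via Proposition~\ref{prop:0}, then sum. The paper indexes the nontrivial eigenvalues directly as $s^k=(s_i)^{kc_i}=t^{kl_ic_i}$ for a fixed reference eigenvalue $s$ (and hence fixed $c_i$'s), whereas you first index by $t^k$ and then reindex twice to recover the displayed form; your version is more explicit about why the final expression is independent of the auxiliary choice of $c_i$, which the paper leaves implicit.
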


\begin{proof}
As in Proposition \ref{prop:0}, $s\,=\, (s_i)^{c_i} = t^{l_i c_i}$
has multiplicity $\sum_{i=1}^m(r-c_i) + r(g-1)$ for 
$d\widetilde{\phi}_L(E_*)$ and the corresponding $b_j$
will be $\frac{ [l_i c_i]_r}{r}$.

Similarly, $s^k \,=\, (s_i)^{k c_i}\, =\, t^{k l_i c_i}$
has multiplicity $\sum_{i=1}^m(r- [kc_i]_r) + r(g-1)$ for 
$d\widetilde{\phi}_L(E_*)$ and the corresponding $b_j$ will be $\frac{[k l_i c_i]_r}{r}$.
\end{proof}

\begin{remark}
For $r \,=\, 3$ and $m\,=\, 1$, the degree shift at $E_*$ for $L$ is either $\frac{4}{ 3} + 3(g-1)$ or 
$\frac{5}{3} + 3(g-1)$ depending on the component in which $E_*$ lies.
\end{remark}

\section{Chen-Ruan cohomology of the moduli space}

Let $S$ and $T$ be two topological spaces, and $\delta \,\in\, H^b(S \times T,\, {\mathbb Q})$. Then
the induced linear map
\begin{equation*}
\sigma(\delta) \,:\, H_c(S ,\, {\mathbb Q}) \,\longrightarrow\, H^{b-c}( T,\, {\mathbb Q})
\end{equation*}
is known as the \emph{slant product}.

The moduli space $P\mathcal{M}_\xi$ is a fine moduli space, and there exists a universal parabolic bundle 
$\mathcal{U}\,\longrightarrow\, X\times P\mathcal{M}_\xi$ \cite[Proposition 3.2]{BY}. Any two such universal bundles
differ by tensoring with a line bundle pulled back from $P\mathcal{M}_\xi$. For $2 \,\leq\, k \,\leq\, r$, let
$a_k({\mathbb P}(U))\, \in\, H^{2k}(\mathcal{U},\, {\mathbb Q})$ be the 
characteristic classes of the projective bundle ${\mathbb P}(U)$, where $U$ is the vector bundle underlying the 
parabolic bundle $\mathcal{U}$. Since any two universal parabolic bundles
differ by tensoring with a line bundle, it follows 
that $a_k({\mathbb P}(U))$ is independent of the choice of the universal bundle (see \cite[Remark 2.1]{BR}). So $a_k({\mathbb P}(U))$ induces linear maps
\begin{equation*}
\sigma_j(a_k({\mathbb P}(U))) \,:\, H_j(X,\,{\mathbb Q})\,\longrightarrow\, H^{2k-j}(P \mathcal{M}_\xi ,\,{\mathbb Q})
\end{equation*}
for $j\, =\, 0,\,1,\,2$. Then by \cite[Theorem 1.5]{BR}, the cohomology algebra $H^*(P\mathcal{M}_\xi,\,{\mathbb Q})$
is generated by the Chern classes $c_j(\mathcal{H}om(U^l_x,\,U^{l-1}_x))$ ($x\,\in\, D$) and the images of
$\sigma(c_1(U))$ and $\sigma_i(a_k({\mathbb P}(U)))$ ($2\,\leq\, k \,\leq\, r$,\, $0\,\leq\, i\,\leq\, 2$).
Here $U_x^l$'s are obtained from the parabolic structure of $U$ at the point $x\,\in\, D$.
The above generators are independent of the universal parabolic bundle $\mathcal{U}$ chosen. 

Consider the action of $\Gamma$ on $P\mathcal{M}_\xi$ induced by $\widetilde{\phi}$, and let
\begin{equation*}
\chi\,\,:\,\, P\mathcal{M}_\xi\,\longrightarrow\, P\mathcal{M}_\xi/\Gamma
\end{equation*}
be the quotient map. The pullback map on the cohomologies for $\chi$ will be denoted by
$\chi^*$. Using the same technique as in \cite[Proposition 4.1]{BD}, we get the following:

\begin{proposition}\label{prop:1}
The homomorphism $$\chi^* \,:\, H^*(P \mathcal{M}_\xi / \Gamma,\,{\mathbb Q})\,\longrightarrow\, H^*(P\mathcal{M}_\xi,\,
{\mathbb Q})$$
is an isomorphism.
\end{proposition}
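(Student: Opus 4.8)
The plan is to factor the statement into two steps: first identify the image of $\chi^*$ with the $\Gamma$-invariant subalgebra, and then prove that the $\Gamma$-action on $H^*(P\mathcal{M}_\xi,\,{\mathbb Q})$ is actually trivial, so that this invariant subalgebra is the whole cohomology. For the first step, since $\Gamma$ is a finite group acting on the compact manifold $P\mathcal{M}_\xi$, I would invoke the standard transfer (averaging) argument with rational coefficients: the operator $\frac{1}{|\Gamma|}\sum_{L\in\Gamma}\widetilde{\phi}_L^*$ is a projection of $H^*(P\mathcal{M}_\xi,\,{\mathbb Q})$ onto its $\Gamma$-invariants, and this identifies $\chi^*$ as an injection with image exactly $H^*(P\mathcal{M}_\xi,\,{\mathbb Q})^{\Gamma}$. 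Consequently $\chi^*$ is an isomorphism if and only if $\widetilde{\phi}_L^*$ acts as the identity on $H^*(P\mathcal{M}_\xi,\,{\mathbb Q})$ for every $L\in\Gamma$, so the whole problem reduces to triviality of the $\Gamma$-action on rational cohomology.

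For the second step it suffices to check invariance on a generating set of the algebra. By \cite[Theorem 1.5]{BR} the algebra $H^*(P\mathcal{M}_\xi,\,{\mathbb Q})$ is generated by the classes $c_j(\mathcal{H}om(U^l_x,\,U^{l-1}_x))$, by $\sigma(c_1(U))$, and by the $\sigma_i(a_k({\mathbb P}(U)))$, all built from a universal bundle $\mathcal{U}$ and, as noted in the excerpt, independent of the chosen $\mathcal{U}$. The key geometric input I would establish is the relation $(\mathrm{id}_X\times\widetilde{\phi}_L)^*\mathcal{U}\,\cong\,\mathcal{U}\otimes p_X^*L\otimes p^*N_L$ for some line bundle $N_L$ on $P\mathcal{M}_\xi$, where $p_X$ and $p$ are the projections of $X\times P\mathcal{M}_\xi$ to its two factors. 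This follows from fineness of the moduli space: both families have classifying map $\widetilde{\phi}_L$ (see \eqref{eq:a22}), hence differ only by a line bundle pulled back from $P\mathcal{M}_\xi$. Applying $\widetilde{\phi}_L^*$ to a generator then amounts to running the same construction with $\mathcal{U}$ replaced by $(\mathrm{id}_X\times\widetilde{\phi}_L)^*\mathcal{U}$.

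With this identity in hand, invariance should follow term by term. For the projective-bundle classes, passing to $\mathbb{P}(U)$ destroys both twists $p_X^*L$ and $p^*N_L$, so $a_k({\mathbb P}((\mathrm{id}_X\times\widetilde{\phi}_L)^*U))=a_k({\mathbb P}(U))$ and the $\sigma_i(a_k({\mathbb P}(U)))$ are fixed. For the $\mathcal{H}om$-classes the twists cancel inside the internal hom, yielding the same bundle and the same Chern classes. The delicate generator is $\sigma(c_1(U))$, where $c_1((\mathrm{id}_X\times\widetilde{\phi}_L)^*U)=c_1(U)+r\,p_X^*c_1(L)+r\,p^*c_1(N_L)$; here I would use that $L\in\mathrm{Pic}^0(X)$ is $r$-torsion, so $c_1(L)=0$ in $H^2(X,\,{\mathbb Q})$ and the middle term vanishes, while the remaining term $r\,p^*c_1(N_L)$ is pulled back from $P\mathcal{M}_\xi$ and is precisely the ambiguity absorbed by the independence-of-$\mathcal{U}$ normalization of the generators.

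The hard part will be exactly this last generator: one must verify that the $p^*N_L$-term is invisible to the particular slant products entering $\sigma(c_1(U))$ (it is annihilated by slanting against the positive-degree homology of $X$, and otherwise absorbed by the normalization making the generators independent of $\mathcal{U}$), while the torsionness of $L$ disposes of the $c_1(L)$ contribution rationally. Granting these two points, every algebra generator is $\widetilde{\phi}_L$-invariant, so the $\Gamma$-action on $H^*(P\mathcal{M}_\xi,\,{\mathbb Q})$ is trivial, and by the first step $\chi^*$ is an isomorphism.
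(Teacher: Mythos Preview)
Your proposal is correct and follows essentially the same approach as the paper, which simply invokes ``the same technique as in \cite[Proposition~4.1]{BD}'' without further detail; that technique is precisely the two-step argument you outline (transfer to reduce to triviality of the $\Gamma$-action, then check invariance on the Biswas--Raghavendra generators via the universal bundle). Your treatment is in fact more explicit than what the paper provides, and your identification of the $\sigma(c_1(U))$ generator as the only delicate point, together with the observation that $c_1(L)=0$ rationally and that the residual $p^*c_1(N_L)$ term is exactly the ambiguity already declared harmless by the paper's assertion that the generators are independent of $\mathcal{U}$, matches how the cited argument in \cite{BD} proceeds.
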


We now describe the Chen-Ruan cohomology algebra of $P\mathcal{M}_\xi/\Gamma$.

The Chen-Ruan cohomology
group of $P\mathcal{M}_\xi/\Gamma$, by definition, is
\begin{equation*}
H^j_{CR}(P\mathcal{M}_\xi/\Gamma,\,{\mathbb Q})\,:=\, \bigoplus_{L \in \Gamma} H^{j-2\pi(L)}(PS(L)/\Gamma,\,
{\mathbb Q})\, ,\ \ j \,\geq\, 0,
\end{equation*}
where the degree shift $\pi(L)$ is given by Corollary
\ref{cor:c1}. The degree shift for the trivial line bundle
$\mathcal{O}_X$ is zero. Using Corollary \ref{cor:1} and Proposition \ref{prop:1}, we get that
\begin{equation}\label{eq:a43}
H^*_{CR}(P\mathcal{M}_\xi/\Gamma,\,{\mathbb Q})\, =\,
H^{*}(P\mathcal{M}_\xi,\, {\mathbb Q}) \oplus
\left(\bigoplus_{L \in \Gamma \setminus \{\mathcal{O}_X\}} H^{*-2\pi(L)}(S(L)/\Gamma,\,{\mathbb Q})^{\oplus^{(r!)^m}}\right).
\end{equation}
The additive structure on
$H^*_{CR}(P\mathcal{M}_\xi/\Gamma,\,{\mathbb Q})$ is the unique operation on it which gives the isomorphism of the
groups in \eqref{eq:a43}. We will now give the product structure `$\cup$' on it.

Let $k \,=\, (r!)^m$. From Section \ref{sec:preliminaries} we have that
the fixed point locus $PS(L)$ is a smooth compact submanifold of
$P\mathcal{M}_\xi$ of real dimension $2(r-1)(g-1)$
(see \cite[p. 519]{BP}) having $k$ connected components which are
copies of $S(L)\,=\,\mbox{Prym}_{\xi} / {\rm Gal}(\pi_L)$. 

Let $\widetilde{\omega}$ denote the $\Gamma$-invariant differential form on
$PS(L)$ which is the pullback of the differential form $\omega$ on
$PS(L)/\Gamma$. Then
\begin{equation*}
\widetilde{\omega} \,=\, (\widetilde{\omega_1},\, \cdots,\,\widetilde{\omega_k}),
\end{equation*}
where $\widetilde{\omega_j}$ is a differential form on the $j$-th copy of
$S(L)$ in $PS(L)$.

Recall that the orbifold integration of a
$2(r-1)(g-1)$-form $\omega$ on $PS(L)/\Gamma$ is defined as 
\begin{equation*}
\int_{PS(L)/\Gamma}^{\mbox{orb}} \omega \,:=\,
\frac{1}{|\Gamma |} \int_{PS(L)} \widetilde{\omega} \,=\,
\frac{1}{|\Gamma |} \sum_{j=1}^k \int_{S(L)} \widetilde{\omega}_j,
\end{equation*}
where $|\Gamma |\, =\, r^{2g}$ is the order of the group $\Gamma$.

The real dimension of $P\mathcal{M}_\xi/\Gamma$ is
$2d \,= \,2(r^2-1)(g-1)+ m(r^2-r)$.

Let $L\,\in\, \Gamma$. For any
$\delta_L \,=\, (\delta_1,\, \cdots, \,\delta_k)\,\in\, H^{n-2\pi(L)}(S(L)/\Gamma,\, {\mathbb Q})^{\oplus k}$ and
$\beta_L\,=\, (\beta_1,\, \cdots,\, \beta_k)\,\in\, H^{2d-n-2\pi(L)}(S(L)/\Gamma,\,{\mathbb Q})^{\oplus k}$, we define
\begin{equation*}
\langle{\delta_L},\, {\beta_L}\rangle^L \,=\, \sum_{i=1}^k \int_{S(L)/\Gamma}^{\mbox{orb}}
\delta_i \wedge \beta_i.
\end{equation*}

\begin{definition}
	\label{def:crpair}
For any integer $0 \,\leq\, j \,\leq\, 2d$, the Chen-Ruan Poincar\'e pairing
\begin{equation*}
\langle -,\,-\rangle_{CR} \,\colon\, H^j_{CR}(P\mathcal{M}_\xi/\Gamma,\,{\mathbb Q}) \times
H^{2d-j}_{CR}(P\mathcal{M}_\xi/\Gamma,\,{\mathbb Q}) \,\longrightarrow\, \mathbb{Q}
\end{equation*}
is defined by
\begin{equation*}
	\label{eq:a48}
\langle{\delta},\, {\beta}\rangle_{CR} \,=\, \sum_{L \in \Gamma} \langle{\delta_L},\, {\beta_L}\rangle^L
\end{equation*}
for all $\delta \,=\, (\delta_L)_{L \in \Gamma} \,\in\, H^j_{CR}(P\mathcal{M}_\xi/\Gamma,\,{\mathbb Q})
\,=\, \bigoplus_{L \in \Gamma} H^{j-2\pi(L)}(PS(L)/\Gamma,\,{\mathbb Q})$ and $\beta \,=\, (\beta_L)_{L \in \Gamma}\,\in
\,H^{2d-j}_{CR}(P\mathcal{M}_\xi/\Gamma,\,{\mathbb Q})\,=\, \bigoplus_{L \in \Gamma} H^{2d-j-2\pi(L)}(PS(L)/\Gamma,\,
{\mathbb Q})$.
\end{definition}

Let $L_1,\, L_2 \,\in\, \Gamma$, and define $L_3 \,= \,(L_1 \otimes L_2)^\vee$. Also let $\widetilde{T}
\,=\, \cap_{i=1}^3 PS(L_i)$ and
$$\widetilde{e_i} \,=\, \widetilde{T}/\Gamma \,\longrightarrow\, PS(L_i)/\Gamma\, , \ \ i\,=\,1,\,2,\,3$$
be the canonical injections.

Let $\widetilde{\mathcal{F}}_{L_1,L_2}$ be the orbifold obstruction bundle on $\widetilde{T}/\Gamma$
(see \cite[Section 4]{BP}) and $c_{top}(\widetilde{\mathcal{F}}_{L_1,L_2})$ its top Chern class. Note that
\begin{equation*}
{\rm rank}(\widetilde{\mathcal{F}}_{L_1,L_2}) \,=\,\dim_{\mathbb R} \widetilde{T} - \dim_{\mathbb{R}} P\mathcal{M}_\xi +
\sum_{j=1}^3 \pi(L_j).
\end{equation*}
Then, for any $\delta \,\in\, H^{p}_{CR}(PS(L_1)/\Gamma,\, {\mathbb Q})$
and $\beta\,\in\, H^{q}_{CR}(PS(L_2)/\Gamma,\, {\mathbb Q})$, we define
$$\delta \cup \beta \,\in\, H^{p+q}_{CR}(PS(L_3)/\Gamma,\, {\mathbb Q})$$
by the relation
\begin{equation*}
\langle{\delta \cup \beta},\, {\psi}\rangle_{CR} \,=\,
\int_{\widetilde{T}/\Gamma}^{orb} \widetilde{e}_1^*(\delta) \wedge \widetilde{e}_2^*(\beta) \wedge
\widetilde{e}_3^*(\psi) \wedge c_{top}(\widetilde{\mathcal{F}}_{L_1,L_2})
\end{equation*}
for all $\psi \,=\, (\psi_1,\, \cdots,\, \psi_k) \,\in\, H^{2d-p-q}_{CR}(PS(L_3)/\Gamma,\,{\mathbb Q})$.
Extending the product `$\cup$' by $\mathbb{Q}$-linearity we get the product structure on
$H^{*}_{CR}(P\mathcal{M}_\xi, \,{\mathbb Q})$ turning it into a ring.

Let $T \,=\, \bigcap_{i=1}^3 S(L_i)$. This space is described in \cite{BP}. The space 
$\widetilde{T}$ is $k$ copies of $T$. The $\Gamma$-equivariant morphism $\gamma\,:\, 
P\mathcal{M}_\xi \,\longrightarrow\, \mathcal{M}_\xi$ maps $\widetilde{T}$ to $T$. Let 
$\mathcal{F}_{L_1,L_2}$ be the obstruction bundle on $T/\Gamma$. Thus we have
$$
\widetilde{\mathcal{F}}_{L_1,L_2}\big\vert_{T/\Gamma}
\,\cong\, \mathcal{F}_{L_1,L_2},
$$
where we have identified any connected component of $\widetilde{T}/\Gamma$ with $T/\Gamma$
using the isomorphism between them given by $\gamma$. It follows that 
\begin{equation*}
\langle{\delta \cup \beta},\, {\psi}\rangle_{CR} \,=\, \sum_{i=1}^k \langle{\delta_i \cup \beta_i},\,
{\psi_i}\rangle_{CR},
\end{equation*}
where the pairing on the right hand side of the equation is the non-degenerate bilinear Poincar\'e pairing for the Chen-Ruan cohomology on $\mathcal{M}_\xi/\Gamma$ (see \cite[6.20]{BP1}). The Chen-Ruan product $\delta_i \bigcup \beta_i$ for the orbifold bundle $\mathcal{M}_\xi/\Gamma$ is computed in
\cite{BP1}. We also have
\begin{equation*}
\delta\cup \beta \,=\, (\delta_1 \cup \beta_1,\, \cdots,\, \delta_k \cup \beta_k).
\end{equation*}
Moreover, if $L_1 \,=\, L_2 \,=\, \mathcal{O}_X$, then the Chen-Ruan product $\cup$ is the ordinary cup product on 
$H^{*}(P\mathcal{M}_\xi/\Gamma,\, {\mathbb Q})$.

\section*{Acknowledgement}

We thank the referee for helpful comments. The first-named author is partially supported 
by a J. C. Bose Fellowship. The School of mathematics of TIFR is supported by 
12-R$\&$D-TFR-5.01-0500.

\end{document}